\newtheorem{de}{Definition} 
\newtheorem{lemma}[de]{Lemma} 
\newtheorem{theo}[de]{Theorem}
\newcommand{\R}{\mathbb R}
\newcommand{\Z}{\mathbb Z}
\newcommand{\N}{\mathbb N}
\newcommand{\A}{\mathcal{A}}
\newcommand{\E}{\mathcal{E}}
\newcommand{\SD}{\mathcal{S}}
\newcommand{\D}{\mathcal{D}}
\newcommand{\RPTS}{\ref{RPT}($\SD'$)}
\newcommand{\Ed}{\E^{\text{dep}}}
\newcommand{\Es}{\E^{\text{start}}}
\definecolor{green}{rgb}{0,0.5,0}
\begin{document}  

\title{Finding Robust Periodic Timetables \\ by Integrating Delay Management}

\author{Julius Pätzold\\
Institute for Numerical and Applied Mathematics\\ University of Goettingen\\
Lotzestraße 16 - 18\\ 37083 Göttingen, Germany \\ j.paetzold@math.uni-goettingen.de}

\maketitle

\begin{abstract}
This paper defines and solves a mathematical model for finding robust periodic timetables by proposing an extension of the Periodic Event Scheduling Problem \eqref{PESP}. In order to model delayed and not nominal travel times already in the timetabling step, we integrate delay management into the periodic timetabling problem. After revisiting both \eqref{PESP} and delay management individually, we introduce a periodic delay management model capable of evaluating periodic timetables with respect to delay resistance. Having introduced periodic delay management, we define the Robust Periodic Timetabling problem \eqref{RPT}. Due to the high complexity of \eqref{RPT} we propose two different simplifications of the problem and introduce solution algorithms for both of them. These solution algorithms are tested against timetables found by standard procedures for periodic timetabling with respect to their delay-resistance. The computational results show that our algorithms yield timetables which can cope better with occurring delays, even on large-scale datasets and with low computational effort.
\end{abstract}

\section{Introduction}

Public transportation planning can be conceived of as a range of different subproblems that need to be solved: On a strategic level,  a public transportation networks need to be designed, operating lines need to be determined, and a timetable needs to be found. On an operational level, vehicles and crew need to be scheduled and finally delays have to be coped with. Each of these subproblems can be solved with respect to different objectives: From the viewpoint of the operating company, for example, the goal is to keep the costs of the public transport system low; passengers, on the other hand, want to have short travel times. Instead of viewing the planning process of public transportation as a whole, most research instead focuses on picking a certain subproblem, e.g., only line planning or only timetabling, and solving it individually. This process, however, merely leads to some local optimal solution. Nevertheless, recent work, e.g., \cite{petersen2013simultaneous, mariediss, Sch16, burggraeve2017integrating, PSSS17, PSS18} shows that the integration of subproblems is often superior to solving single problems sequentially. For example, \cite{PSS18} show how the integration of line planning, timetabling and vehicle scheduling leads to cost-optimal public transport plans that still can be computed efficiently. In this paper, though, we take on the passengers' view on a public transport system and concentrate on finding delay-resistant timetables. Delay-resistance is a key aspect when modeling a quality measure from a passengers' point of view. Nominal travel times, i.e., estimated travel times without delays, are certainly relevant for the attractiveness of traveling by train in general; Nevertheless, the amount of experienced delay (and uncertainty in general) impacts the passengers' reception on a public transport system even more, mostly in a negative way. Put differently, complaints about the nominal length of travel times are rare, whereas train punctuality is highly complained about and discussed by customers and press (see, e.g., \cite{connolly_2018}). Despite its high practical relevance, this aspect of considering delayed travel times is often neglected in periodic timetabling research. In our paper we take this factor into account and include a robustness measure -- motivated by delay management considerations -- and hence train punctuality into periodic timetabling. To meet this objective, we propose an approach to integrate the two problems of periodic timetabling and delay management by incorporating the concept of adjustable robust optimization.

The remainder of this paper is structured as follows: First, we introduce periodic timetabling and give a literature overview including how robustness concepts are applied to it. In the next section we define delay management and give a model for adapting it to periodic timetabling, hence defining a new evaluation of a periodic timetable. In Section \ref{sec:rpt}, we formulate the problem of finding robust periodic timetables and give two solution approaches based on iterative cutting-plane techniques. We reinforce the quality of our approaches via computational experiments in Section \ref{sec:exp}, and conclude the paper in Section \ref{sec:con}.

\section{Periodic Timetabling}\label{PTT-R}

Periodic timetabling is one of the most difficult problems in public transport optimization. Since its introduction in \cite{serafini1989mathematical} it has been studied extensively, see  \cite{odijk1996constraint, nachtigall1998periodic, peeters2003cyclic, liebchen2007periodic}. The problem first requires the definition of an event-activity-network.

\begin{de}
A periodic event-activity-network (EAN) is a directed graph $(\E, \A)$. The nodes $\E = \Ed\;\cup\; \E^{\text{arr}}$ are divided into departure and arrival events. Furthermore the activities are divided into the set of driving and waiting activities $\A^{dw}$ and the set of changing activities $\A^{ch}$, i.e., $\A=\A^{dw}\cup\A^{ch}$. We assume that $A^{dw}$ corresponds to train lines and hence forms a set of node-disjoint paths $\{l_1,...,l_n\}$ covering all events $\E$.
Each activity $a\in\A$ has lower and upper bounds $[L_a, U_a]$ for the allowed duration and a weight $w_a$ corresponding to the number of passengers traveling on that activity. For every event $i\in\E$ we are also given the number of passengers $w_i$ unboarding the train at event $i$ (for arrival events $i\in\E^{\text{arr}}$) or boarding the train at event $i$ (for departure events $i\in\Ed$).
\end{de}

Now the problem can be stated as follows.

\begin{de}[\cite{serafini1989mathematical}]
Given a periodic EAN ($\E,\A$), the Periodic Event Scheduling Problem assigns a time $\pi_e\in \N$ to each event $e\in\E$ in order to solve
\begin{align}\label{PESP}\tag{PESP}
\min \quad & \sum_{a=(i,j)\in \A}  w_a (\pi_j - \pi_i + z_a T)\\
\text{s.t.}\quad & \pi_j - \pi_i + z_a T \leq U_a \quad \forall a=(i,j)\in \A,\\
& \pi_j - \pi_i + z_a T \geq L_a \quad \forall a=(i,j)\in \A,\\
& z_a\in \Z \quad \forall a\in \A,\nonumber\\
& \pi_i \in \N \quad \forall i\in \E.\nonumber
\end{align}
$T\in \N$ denotes the time period, which is usually assumed to be $60$ minutes. For later use we define $\pi_a\coloneqq \pi_j - \pi_i + z_a T$ to be the time duration for all activities $a\in\A$ (with the summand $z_aT$ modeling the ``modulo $T$'' operation). We refer to a vector $\pi$ as a timetable. The set of all feasible timetables to an EAN is $\Pi = \Pi(\E, \A)$.
\end{de}
Several solution approaches have been proposed with the most prominent improvements being the cycle-base formulation (see \cite{peeters2001cycle, BorndoerferHoppmannKarbsteinetal18}), the modulo simplex algorithm (\cite{nachtigall2008solving, Goerigk-Sch-PESP}) and SAT-Formulations (\cite{grossmann2012solving}). Until today, there is an ongoing endeavor to improve on solving instances of \ref{PESP}.
Compared on the benchmark PESPlib (see \cite{pesplib}), the current best working solutions use a combination of several approaches to improve on the current best solutions: \cite{goerigk2017improved} combine modulo simplex and cycle-base formulations in an iterative manner. This paper's author extended \cite{PS16} to a general divide-and-conquer approach and applied the cycle-base formulation with several heuristics to improve on their solutions. Recently, \cite{BorndoerferLindnerRoth2019} combine Modulo Simplex, SAT, IP approaches and heuristics to deliver the current best solutions for PESPlib.
\medskip

When solving \eqref{PESP}, the objective is to minimize the sum of all passengers' travel times, which seems to be reasonable at first glance. In practice, however, there exist at least two problems for adapting a timetable found by solving \eqref{PESP}:

First, one does not know how passengers choose their routes in the EAN beforehand. For this problem, which formally boils down to the weights $w_a$ being variable instead of fixed, several attempts have been made to formulate an optimization problem that integrates the choice of passenger routes, see \cite{Goerigk-Siebert13, mariediss, SchmSch14, ttwr, gattermann2016integrating, borndorfer2017passenger}. 

The second problem is that there are usually delays in the network that need to be dealt with. The total amount of delay in a network may not be clear beforehand, but it is definitely unrealistic to completely neglect their existence and to optimize a timetable that is only guaranteed to work well in the nominal case of no delays (as is done when solving \ref{PESP}).
There exist several attempts to overcome this second problem of timetables being deficiently delay-resistant:

\cite{Goerigk15} applies the concept of recovery robustness for periodic timetabling, which was first introduced in \cite{liebchen2009concept} and extended in \cite{goerigk2014recovery} for aperiodic timetabling. It allows the modification of a solution after some scenario has been revealed. The aim of \cite{Goerigk15} is to minimize the cost of recovering to a solution over all scenarios. A heuristic approach for large instances is also given by solving a bicriteria optimization problem with the two linear objectives of travel time and robustness. A different approach to solve this problem is given in \cite{PBDM19}. The authors define a robust version of \eqref{PESP} by assuming that delays impact the interval $[L_a, U_a]$ for each activity and require a feasible robust timetable to be able to adjust its times in order to maintain feasibility for all activities. 
Another proposal for finding delay-resistant timetables makes use of stochastic optimization: In \cite{kroon2008stochastic}, the authors sample scenarios and try to optimize a rolled-out version of the timetable for all disturbances occurring in each of these samples. By restricting themselves to keep the cyclic train order fixed and only optimizing the slack times that can be allocated, the authors maintain tractability of the problem. Further research on robust periodic timetabling research includes \cite{kroon2007cyclic,LSSSP07,caimi2011periodic,maroti2017branch}. Next to robust periodic timetabling, there exist robust variants of the aperiodic timetabling problem, which is polynomially solvable in its nominal version. Literature includes \cite{fischetti2009fast,Atmos2010-goerigk, DHSS11, cacchiani2012lagrangian, lu2017improving}. For surveys on robustness in timetabling (periodic and aperiodic), see \cite{cacchiani2012nominal} and \cite{lusby2018survey}.

Interestingly, many of the mentioned attempts to robustify periodic timetabling view the problem as a three-stage process, consisting of 
\begin{center}
timetabling $\rightarrow$ scenario reveals $\rightarrow$ delay management. 
\end{center}
The third stage is just called differently: For example, \cite{Goerigk15} call it recovery of the timetable, \cite{PBDM19} call it adjustment by a linear decision rule and \cite{kroon2008stochastic} omit the third stage by mentioning that their model does not include traffic control decisions.
In this paper, we comprehend the third stage (delay management) as such by incorporating the original delay management problem as defined in the public transport optimization literature (see also next section). By doing so, we not only give a new model for robust periodic timetabling, but also model an integration of the periodic timetabling and the delay management problem. In the following we define the latter problem.

\section{Delay Management and Periodicity}

Delays constitute a major source of uncertainty when operating a bus or railway system. If a train is delayed, many rescheduling decisions have to be made, each of which may disturb the nominal schedule of a public transport system. The question of whether an otherwise punctual train should wait for a delayed feeder train in order to allow transferring passengers to reach their connection is known as delay management problem and has been studied extensively in the literature. The first papers dealing with this kind of question date back to \cite{Sch01,SBK01}. Integer programming models have been developed in \cite{Sch06,GHL05} and a recent survey about delay management models can be found in \cite{ORI}. 

Delay management is traditionally carried out in an aperiodic way. To this end, a periodic EAN and timetable can be rolled out for a certain time horizon in order to receive their aperiodic pendants. 

\begin{de}\label{de:rollout}
Let a periodic EAN ($\E,\A$), a time period $T$ and a timetable $\pi\in\Pi(\E,\A)$ be given. For a time horizon $[L,U]\subset\R$ we define an aperiodic EAN ($\E^*, \A^*$) by 
\begin{align*}
\E^*& \coloneqq \{(e,n)\in \E \times \Z \;| \;L\leq n \cdot T + \pi_e \leq U \},\\
\A^* & \coloneqq \{ (i,n), (j,m)\in \E^*\times \E^* \; |\; (i,j) = a \in \A \; \wedge\; L_a \leq \pi_j - \pi_i + (m-n) T \leq U_a \},
\end{align*}
and the aperiodic timetable $\pi^*_i = \pi_i + n T$ for all $(i,n)\in\E^*$.
\end{de}

To formulate an integer programming model for the delay management problem, we need to formally introduce delays. As is commonly done, we assume that a set of potentially expected source delays is known, e.g., caused by signaling problems, construction work, accidents, or bad weather conditions. These source delays cause propagated delays, e.g., for the same train at subsequent stations or for other trains that wait for the delayed train.
As in \cite{SchaSchoe09b} we allow two types of source delays: The first type is a delay $s_e\in\N$ at an event $e \in \E$ (e.g., staff being late for their shift) referring to a fixed point in time. The second type of source delay is a delay $s_a$ which increases the duration of an activity $a\in \A$, e.g., an increase of travel time between two stations due to construction work. Such a delay $s_a$ has to be added to the minimal duration $L_a$ of activity $a$.
If an event or activity has no source delay, we assume $s_e=0$ or $s_a=0$, respectively. We hence define the set of possible source delays for an EAN as
\begin{de}\label{de:unc}
Given an EAN (periodic or aperiodic) ($\E,\A$), define a set of scenarios $\SD$ as
\[
\SD(\E,\A)\coloneqq \{ s \in \R_{\geq 0}^{|\E|+|\A|} | \; s_i \leq \sigma \; \forall i\in \E\cup\A, \|s\|_1 \leq \rho \},
\]
with $\sigma \geq 0$ as the maximum single delay and $\rho\geq 0$ as the maximum sum of all source delays. We assume that there are no source delays on the change activities ($s_a=0$ for all $a\in\A^{ch}$).
\end{de} 

Hence, we define the uncertainty set similar to \cite{PBDM19}, which originates from \cite{bertsimas2011theory} that introduce a parameter (here, $\rho$) to regulate the ``budget of uncertainty'' which corresponds in this case to the amount of source delay $\rho$ that can be distributed to the activities $a\in\A$.

Having defined source delays, we can now state the integer programming formulation for the delay management problem. To model the wait-depart decisions, i.e., whether some train should wait for some other train at a station or not, we introduce binary variables
\[
y_a =
\begin{cases}
0 & \mbox{if changing activity } a \mbox{ is maintained,}\\
1 & \mbox{otherwise,}
\end{cases}
\]
for all changing activities $a \in \A^{ch}$.
The integer programming formulation then reads as follows:
\begin{de}[\cite{Sch01}]
Given an aperiodic EAN ($\E^*,\A^*$), an associated timetable $\pi\in\N^{|\E^*|}$, and source delays $s\in\SD(\E^*,\A^*)$, define the delay management problem as
\begin{align} \label{DM}
\tag{DM}&\min   & \sum_{i \in \E^{*\text{arr}}} w_i d_i &+ \sum_{a \in \A^{*ch}}  y_a w_a T  \\
&\text{s.t.}\quad &  d_i                  & \geq s_i  & \forall  &i \in \E^*, \label{DM-1}\\
&& \pi_j - \pi_i + d_j  - d_i & \geq  L_a + s_a & \forall & a=(i,j) \in \A^{*dw}, \label{DM-2}\\
&& M y_a + \pi_j - \pi_i + d_j - d_i & \geq  L_a  & \forall &a=(i,j) \in \A^{*ch},\label{DM-3}\\
&& d_i                  & \in   \R  & \forall &i \in \E^*, \nonumber \\
&& y_a                  & \in   \{0,1\}  & \forall  &a \in \A^{*ch} \nonumber.
\end{align}
The new timetable, called disposition timetable, is now defined as $(\pi_i + d_i)_{i\in\E^*}.$
\end{de}

Self-evidently, the delays $d_i$ have to be greater than the source delays at the respective events $i\in\E^*$. Then, for every driving or waiting activity $a\in\A^{*dw}$ the duration $\pi_j + d_j - (\pi_i + d_i)$ after disposition needs to be greater than the lower bound $L_a$ plus some possible source delays on that activity $s_a$ because we assume that the train cannot drive faster than that, i.e., \eqref{DM-2}. For the changing activities $a\in\A^{*ch}$ the model can decide if a change is maintained ($y_a=0$). In this case this activity has to satisfy the same inequality as all driving and waiting activities. If, on the other hand, the change is not maintained, then big $M$ in \eqref{DM-3} is triggered and the inequality does always hold. In that case, however, the objective function adds a full time period $T$ for every passengers that has missed the respective change. The objective function furthermore sums up the delay for all passengers up to the point at which they get off at their destination. This, of course, is only an approximation of reality. There exist more sophisticated models that take passenger rerouting into account, but they do so at the expense of a much more complicated and hence slower model.

The $d$-variables of model \eqref{DM} are defined slightly differently ($x_i\coloneqq d_i + \pi_i$ for $i\in\E^*$, see, e.g., \cite{Sch06}), but the above notation will be more convenient for later use.
Also note that \eqref{DM} does not consider upper bounds $U_a$ for activities $a\in\A$, which is reasonable since the duration $\pi_a$ of an activity plus the propagated delay $d_a$ should not be bounded by a model assumption, as this might lead to infeasibilities of the model, e.g., if $L_a + s_a > U_a$. Removing this assumption also from \eqref{PESP} would be a viable option, but is neglected here in order to maintain feasibility of the obtained timetable for \eqref{PESP}.% What instead makes sense to bound is the size of the exogene source delays.
\medskip

There exist several shortcomings of this delay management formulation: The passenger distribution $w$, for example, is not fixed in reality, and also penalizing a missed change with exactly $T$ minutes is not always correct. Nevertheless, \eqref{DM} can be regarded as a reasonable approximation for the delay management process, hence its establishment in the literature.
%Regardless of the establishment of model \eqref{DM} in the literature, it contains several flaws. One of them is that if a passenger travels from event $i$ to event $j$ by first taking train $A$ and then transferring to train $B$ via changing activity $a$, the model has difficulties when train $A$ is delayed by more than an hour, e.g. two hours. In this case, when train $B$ is on time and does not wait for train $A$, then $d_j$ will be $0$ and one hour (resp. time period $T$) will be added as penalty for missing change activity $a$.
%Second, if train $A$ is now delayed for one hour, train $B$ is on time and does not wait, and train $B$'s follow-up (scheduled one hour later) train $B'$ is delayed by one hour, then model \eqref{DM} only counts one hour of delay (because of the missed connection from train $A$ to train $B$) and not two hours (since train $B'$ was scheduled one hour later and is delayed for an additional hour).
%In general the weights $w_i$ need to be adjusted if some connection is missed, but this would lead to a difficult non-linear model.
In this paper we make use of the simplicity of \eqref{DM} by being able to fit a modified version of it into periodic timetabling. By doing so we are able to give an approximative evaluation of the behavior of a periodic timetable in (aperiodic) delay management, i.e., for \eqref{DM}.
The modification of \eqref{DM} to a periodic setting is the following: We assume that delays occur periodically and that, accordingly, the delayed timetable $d$ also needs to work periodically. This yields the subsequent model.
\begin{de}
Given a periodic EAN ($\E,\A$), a periodic timetable $\pi\in\Pi(\E, \A)$ and source delays $s\in\SD(\E,\A)$, we define the periodic delay management problem as
\begin{align} \label{P-DM}
\tag{P-DM}&\min    &\sum_{a \in \A} w_a d_a &+ {\sum_{i\in\Ed} w_i d_i}&\\
&\text{s.t.}\quad &  d_i                  & \geq  s_i  & \forall  &i \in \E \label{P-DM-1},\\
& & d_a &= d_j - d_i \quad &\forall &a=(i,j) \in \A^{dw}\label{P-DM-2},\\
&&  d_a &= d_j - d_i + z_a T & \forall &a=(i,j) \in \A^{ch}\label{P-DM-3},\\
&& \pi_a + d_a       & \geq  L_a + s_a & \forall & a=(i,j) \in \A \label{P-DM-4},\\
&& d_i                  & \in   \R  & \forall &i \in \E \cup \A, \nonumber\\
&& z_a                  & \in   \Z  & \forall  &a \in \A^{ch}, \nonumber
\end{align}
with the set of all feasible propagated delays $d$ being 
\[
\D = \D(\pi, s) \coloneqq \{d\in \R^{|\E| + |\A|}\; | \; \exists \, z\in\Z^{|\A^{ch}|} \text{ s.th. } (d,z) \text{ is feasible for \eqref{P-DM}} \}.
\]
\end{de}

At first glance, \eqref{P-DM} looks fairly similar to \eqref{DM}: The propagated delays should still be larger than the source delays, i.e., \eqref{P-DM-1}. In \eqref{P-DM-2} we then introduce propagated delays on activities by setting $d_a = d_j - d_i$, which could also be done for \eqref{DM}. The delay of a train adds up along driving and waiting activities (since these are executed by the same train) which is why we have to respect periodicity for delays $d_a$ only for changing activities, i.e., \eqref{P-DM-3}. A thorough example is given in Figure \ref{Big-example}. Note that this model assumes (similar to \eqref{DM}) that the choice of passenger paths is fixed. Otherwise a source delay of $T$ should not influence a periodic timetable because all passengers are able to take an earlier train which is then delayed by $T$ minutes. 

\begin{figure}[ht!]
\centering
\subfigure[Periodic EAN]{
    \resizebox{!}{0.26\textwidth}{
        \begin{tikzpicture}
  [auto=left,every node/.style={circle,fill=blue!40, minimum width=4em, scale=1.3}]
  \node (n1) at (-3,5)  {GÖ dep};
  \node (n2) at (0,5)  {H arr};
  \node (n3) at (3,5)  {H dep};
  \node (n4) at (6,5)  {HH arr};
    \node (n5) at (9,5)  {HH dep};
    \node (n6) at (12,5)  {HB arr};
   \node (n7) at (-3,0)  {OL dep};
  \node (n8) at (0,0)  {HB arr};
  \node (n9) at (3,0)  {HB dep};
  \node (n10) at (6,0)  {H arr};
    \node (n11) at (9,0)  {H dep};
      \node (n12) at (12,0)  {BS arr};
      %edges
      \draw[blue!90, -triangle 90] (n1) to node[draw=none, fill=none, label ={drive}, below] {} (n2);
        \draw[blue!90, -triangle 90] (n2) to node[draw=none, fill=none] {wait} (n3);
            \draw[blue!90, -triangle 90] (n3) to node[draw=none, fill=none] {drive} (n4);
        \draw[blue!90, -triangle 90] (n4) to node[draw=none, fill=none] {wait} (n5);
            \draw[blue!90, -triangle 90] (n5) to node[draw=none, fill=none] {drive} (n6);
                \draw[blue!90, -triangle 90] (n7) to node[below,draw=none, fill=none] {drive} (n8);
    \draw[blue!90, -triangle 90] (n8) to node[below, draw=none, fill=none] {wait} (n9);
    \draw[blue!90, -triangle 90] (n9) to node[below, draw=none, fill=none]  {drive}(n10);
        \draw[blue!90, -triangle 90] (n10) to node[below, draw=none, fill=none] {wait} (n11);
    \draw[blue!90, -triangle 90] (n11) to node[below, draw=none, fill=none]  {drive}(n12);
% Changes
    \draw[black!100, -triangle 90] (n2) to node[near start, below, left, draw=none, fill=none]  {change}(n11);
        \draw[black!100, -triangle 90] (n6) to node[near start, draw=none, fill=none]  {change}(n9);
                \draw[black!100, -triangle 90] (n10) to node[near end, above, right, draw=none, fill=none]  {change}(n3);
      \end{tikzpicture}
    }
}
\subfigure[Periodic Timetable with all $\pi_i,i\in\E$ and $\pi_a,a\in\A$]{
    \resizebox{!}{0.26\textwidth}{
        \begin{tikzpicture}
  [auto=left,every node/.style={circle,fill=blue!40, minimum width=4em, scale=1.3}]
  \node (n1) at (-3,5)  {0};
  \node (n2) at (0,5)  {15};
  \node (n3) at (3,5)  {18};
  \node (n4) at (6,5)  {30};
    \node (n5) at (9,5)  {33};
    \node (n6) at (12,5)  {50};
   \node (n7) at (-3,0)  {35};
  \node (n8) at (0,0)  {57};
  \node (n9) at (3,0)  {0};
  \node (n10) at (6,0)  {22};
    \node (n11) at (9,0)  {25};
      \node (n12) at (12,0)  {40};
      %edges
      \draw[blue!90, -triangle 90] (n1) to node[draw=none, fill=none] {\textcolor{black}{15}} (n2);
        \draw[blue!90, -triangle 90] (n2) to node[draw=none, fill=none] {\textcolor{black}{3}} (n3);
            \draw[blue!90, -triangle 90] (n3) to node[draw=none, fill=none] {\textcolor{black}{12}} (n4);
        \draw[blue!90, -triangle 90] (n4) to node[draw=none, fill=none] {\textcolor{black}{3}} (n5);
            \draw[blue!90, -triangle 90] (n5) to node[draw=none, fill=none] {\textcolor{black}{17}} (n6);
                \draw[blue!90, -triangle 90] (n7) to node[below, draw=none, fill=none] {\textcolor{black}{22}} (n8);
    \draw[blue!90, -triangle 90] (n8) to node[below, draw=none, fill=none] {\textcolor{black}{3}} (n9);
    \draw[blue!90, -triangle 90] (n9) to node[below, draw=none, fill=none]  {\textcolor{black}{22}}(n10);
        \draw[blue!90, -triangle 90] (n10) to node[below, draw=none, fill=none] {\textcolor{black}{3}} (n11);
    \draw[blue!90, -triangle 90] (n11) to node[below, draw=none, fill=none]  {\textcolor{black}{15}}(n12);
% Changes
    \draw[black!100, -triangle 90] (n2) to node[near start, below, left, draw=none, fill=none]  {10}(n11);
        \draw[black!100, -triangle 90] (n6) to node[near start, draw=none, fill=none]  {10}(n9);
                \draw[black!100, -triangle 90] (n10) to node[near end, above, right, draw=none, fill=none]  {56}(n3);
      \end{tikzpicture}
    }
}
\subfigure[Periodic Source Delays added, i.e., \textcolor{red}{$s_i$}, $i\in\E$ and \textcolor{red}{$s_a$}, $a\in\A$]{
    \resizebox{!}{0.26\textwidth}{
        \begin{tikzpicture}
  [auto=left,every node/.style={circle,fill=blue!40, minimum width=4em, scale=1.3}]
  \node (n1) at (-3,5)  {0};
  \node (n2) at (0,5)  {15};
  \node (n3) at (3,5)  {18};
  \node (n4) at (6,5)  {30};
    \node (n5) at (9,5)  {33};
    \node (n6) at (12,5)  {50};
   \node (n7) at (-3,0)  {35};
  \node (n8) at (0,0)  {57};
  \node (n9) at (3,0)  {0};
  \node (n10) at (6,0)  {22};
    \node (n11) at (9,0)  {25\textcolor{red}{+10}};
      \node (n12) at (12,0)  {40};
      %edges
      \draw[blue!90, -triangle 90] (n1) to node[draw=none, fill=none] {\textcolor{black}{15}\textcolor{red}{+8}} (n2);
        \draw[blue!90, -triangle 90] (n2) to node[draw=none, fill=none] {\textcolor{black}{3}} (n3);
            \draw[blue!90, -triangle 90] (n3) to node[draw=none, fill=none] {\textcolor{black}{12}\textcolor{red}{+2}} (n4);
        \draw[blue!90, -triangle 90] (n4) to node[draw=none, fill=none] {\textcolor{black}{3}} (n5);
            \draw[blue!90, -triangle 90] (n5) to node[draw=none, fill=none] {\textcolor{black}{17}\textcolor{red}{+5}} (n6);
                \draw[blue!90, -triangle 90] (n7) to node[below, draw=none, fill=none] {\textcolor{black}{22}} (n8);
    \draw[blue!90, -triangle 90] (n8) to node[below, draw=none, fill=none] {\textcolor{black}{3}} (n9);
    \draw[blue!90, -triangle 90] (n9) to node[below, draw=none, fill=none]  {\textcolor{black}{22}}(n10);
        \draw[blue!90, -triangle 90] (n10) to node[below, draw=none, fill=none] {\textcolor{black}{3}\textcolor{red}{+7}} (n11);
    \draw[blue!90, -triangle 90] (n11) to node[below, draw=none, fill=none]  {\textcolor{black}{15}}(n12);
% Changes
    \draw[black!100, -triangle 90] (n2) to node[near start, below, left, draw=none, fill=none]  {10}(n11);
        \draw[black!100, -triangle 90] (n6) to node[near start, draw=none, fill=none]  {10}(n9);
                \draw[black!100, -triangle 90] (n10) to node[near end, above, right, draw=none, fill=none]  {56}(n3);
      \end{tikzpicture}
    }
}
\subfigure[Periodic Disposition Timetable, i.e., \textcolor{green}{$\pi_i + d_i$}, $i\in\E$ and \textcolor{green}{$\pi_a + d_a$}, $a\in\A$]{
    \resizebox{!}{0.26\textwidth}{
        \begin{tikzpicture}
  [auto=left,every node/.style={circle,fill=blue!40, minimum width=4em, scale=1.3}]
  \node (n1) at (-3,5)  {\textcolor{green}{0}};
  \node (n2) at (0,5)  {\textcolor{green}{23}};
  \node (n3) at (3,5)  {\textcolor{green}{26}};
  \node (n4) at (6,5)  {\textcolor{green}{40}};
    \node (n5) at (9,5)  {\textcolor{green}{43}};
    \node (n6) at (12,5)  {\textcolor{green}{65}};
   \node (n7) at (-3,0)  {\textcolor{green}{35}};
  \node (n8) at (0,0)  {\textcolor{green}{57}};
  \node (n9) at (3,0)  {\textcolor{green}{0}};
  \node (n10) at (6,0)  {\textcolor{green}{22}};
    \node (n11) at (9,0)  {\textcolor{green}{35}};
      \node (n12) at (12,0)  {\textcolor{green}{50}};
      %edges
      \draw[blue!90, -triangle 90] (n1) to node[draw=none, fill=none] {\textcolor{green}{23}} (n2);
        \draw[blue!90, -triangle 90] (n2) to node[draw=none, fill=none] {\textcolor{green}{3}} (n3);
            \draw[blue!90, -triangle 90] (n3) to node[draw=none, fill=none] {\textcolor{green}{14}} (n4);
        \draw[blue!90, -triangle 90] (n4) to node[draw=none, fill=none] {\textcolor{green}{3}} (n5);
            \draw[blue!90, -triangle 90] (n5) to node[draw=none, fill=none] {\textcolor{green}{22}} (n6);
                \draw[blue!90, -triangle 90] (n7) to node[below, draw=none, fill=none] {\textcolor{green}{22}} (n8);
    \draw[blue!90, -triangle 90] (n8) to node[below, draw=none, fill=none] {\textcolor{green}{3}} (n9);
    \draw[blue!90, -triangle 90] (n9) to node[below, draw=none, fill=none]  {\textcolor{green}{22}}(n10);
        \draw[blue!90, -triangle 90] (n10) to node[below, draw=none, fill=none] {\textcolor{green}{13}} (n11);
    \draw[blue!90, -triangle 90] (n11) to node[below, draw=none, fill=none]  {\textcolor{green}{15}}(n12);
% Changes
    \draw[black!100, -triangle 90] (n2) to node[near start, below, left, draw=none, fill=none]  {\textcolor{green}{12}}(n11);
        \draw[black!100, -triangle 90] (n6) to node[near start, draw=none, fill=none]  {\textcolor{green}{55}}(n9);
                \draw[black!100, -triangle 90] (n10) to node[near end, above, right, draw=none, fill=none]  {\textcolor{green}{4}}(n3);
      \end{tikzpicture}
    }
}
\caption{Periodic Delay Management}
\label{Big-example}
\end{figure}
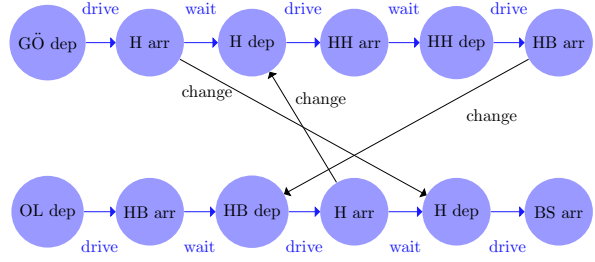
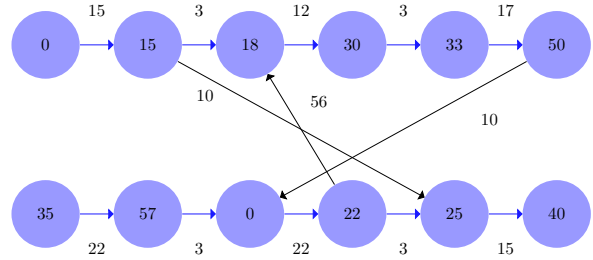
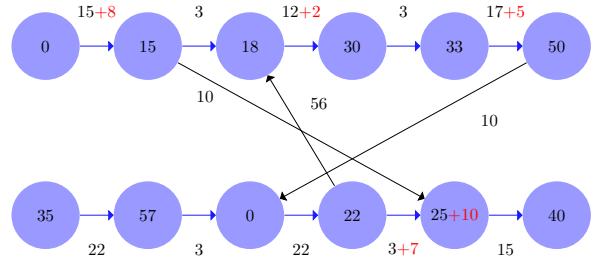
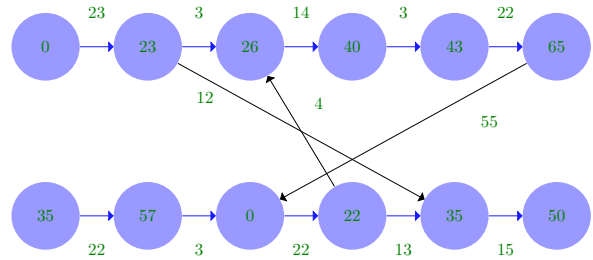

In the objective function we sum up the delayed activities and delayed departure events, which corresponds to summing up the delay on arrival events, since for every node-path ($e_1, ..., e_n$) in the EAN it holds that
\[
d_{e_n} = d_{e_1} + \sum_{i=2}^n d_{e_i} - d_{e_{i-1}} = d_{e_1} + \sum_{i=2}^n d_{(e_i,e_{i-1})}.
\]
By summing up along the activities (and departure events) and due to the periodicity we do not have to deal with missed changes separately, since a missed change is modeled by the periodicity as a long-lasting change activity (cf. (d) in Figure \ref{Big-example}). Whereas this provides a way to cope with cases of one train being delayed for more than one hour, the periodicity shrinks the space of source scenarios: In model \eqref{P-DM} every repetition of a train is assumed to have the same source delay and for each repeating train the same delay management strategy has to be chosen. Then again, it can be reasoned that it is of course possible to consider not only one but many different scenarios and to choose a timetable that is robust (or delay-resistant) to all of them. In Section \ref{sec:exp} we will discuss whether a periodic timetable found by minimizing \eqref{P-DM} is also delay-resistant when rolled out to an aperiodic network and evaluated by \eqref{DM}.
As a side note, see that \eqref{P-DM} can also be used to model disturbances like construction work: In this case source delays occur periodically and a revised periodic timetable needs to be found. 

The problem with delay management models by themselves, be it \eqref{DM} or \eqref{P-DM}, is that the nominal timetable is fixed and hence the model can only minimize the delays $d$ and not overall travel times $\pi + d$. This fact hence leads to the idea of combining the two objectives of nominal travel time $\pi$ and delayed times $d$ into a problem that evaluates a timetable by its overall behavior, meaning nominal travel time plus delays.

\section{Robust Periodic Timetabling}\label{sec:rpt}

In this section we now define a problem that allows us to find delay-resistant timetables. To this end, we make use of the concept of adjustable robustness (cf. \cite{ben2004adjustable}) which arises in robust optimization (see, e.g., \cite{ben2009robust, GoeSchoe13-AE}) to define the following problem.

\begin{de}
Given a periodic EAN ($\E,\A$), we search for a timetable $\pi\in\Pi$ that has the best worst-case behavior with respect to all scenarios $s\in\SD=\SD(\E,\A)$ that are likely to happen. Formally, we want to solve the problem
\begin{equation}\label{RPT}\tag{RPT}
\min_{\pi\in\Pi}\,\sup_{s\in\SD}\min_{d\in\mathcal{D}(\pi, s)} \tau(\pi,d)
\end{equation}
with
\[
\tau(\pi,d)\coloneqq \sum_{a\in\A} w_a (\pi_a + d_a) + {\sum_{i\in\Ed} w_i d_i}.
\]
\end{de}
The intention behind minimizing a timetable $\pi$ against its worst-case scenario $s\in\SD$ is that we want to require robustness against ``small'' perturbations of the nominal timetable. These perturbations $\SD$ are modeled by choosing $\sigma$ and $\rho$ to be rather small parameters. Details can be found in Table \ref{tab:data} in Section \ref{sec:exp}.

Of course, solving \eqref{RPT} is quite an ambitious endeavor: In Section \ref{PTT-R} we already mentioned the intrinsic difficulty of solving \eqref{PESP} in itself. In \eqref{RPT}, however, an additional layer of difficulty is added since, even if a timetable $\pi$ is fixed, the remaining sup-min problem of determining the worst-case scenario is a discrete bilevel-optimization problem (or, more precisely, a continuous-discrete bilevel problem, as $\SD$ is continuous, but the $z-$variables within the constraints of $\D$ are discrete), for which there is no known procedure to generally solve it to optimality (cf. \cite{sinha2018review}). %{Even the existence of a worst-case scenario $s\in\SD$ for a given timetable might not be guaranteed in general.} 
Nonetheless, for \eqref{RPT} in general we can at least show the existence of a minimal timetable $\pi\in\Pi$.

\begin{lemma}
There exists an optimal solution $\pi\in\Pi$ to \eqref{RPT}.
\end{lemma}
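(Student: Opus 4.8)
The plan is to reduce the outer minimization over the infinite set $\Pi$ to a minimization over a finite set, on which a minimum is automatically attained. Write $f(\pi)\coloneqq\sup_{s\in\SD}\min_{d\in\D(\pi,s)}\tau(\pi,d)$ for the objective of \eqref{RPT}. The crucial observation is that $f$ depends on $\pi$ only through the vector of activity durations $(\pi_a)_{a\in\A}$: the function $\tau(\pi,d)=\sum_{a\in\A}w_a(\pi_a+d_a)+\sum_{i\in\Ed}w_id_i$ involves $\pi$ only via the $\pi_a$, and among the constraints defining $\D(\pi,s)$ only \eqref{P-DM-4}, i.e.\ $\pi_a+d_a\ge L_a+s_a$, references $\pi$, again solely through $\pi_a$. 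Hence both the feasible region $\D(\pi,s)$ and the value of the inner sup-min problem are functions of $(\pi_a)_{a\in\A}$ alone.

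First I would make this reduction precise. For any feasible $\pi\in\Pi$ the \eqref{PESP} constraints force $\pi_a=\pi_j-\pi_i+z_aT\in[L_a,U_a]$, and since $\pi_e\in\N$ and $z_a\in\Z$ each $\pi_a$ is an integer lying in the finite set $[L_a,U_a]\cap\Z$. Consequently the map $\pi\mapsto(\pi_a)_{a\in\A}$ sends $\Pi$ into the finite set $P\coloneqq\prod_{a\in\A}\bigl([L_a,U_a]\cap\Z\bigr)$. By the observation above, $f$ factors through this map, so $\min_{\pi\in\Pi}f(\pi)=\min_{p\in P'}\tilde f(p)$, where $P'\subseteq P$ is the nonempty finite set of duration vectors actually realized by some $\pi\in\Pi$ and $\tilde f$ is the induced function. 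A function on a finite nonempty set attains its minimum, and any $\pi\in\Pi$ realizing a minimizing $p\in P'$ is then an optimal solution of \eqref{RPT}.

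Two points remain to be checked, and the second is where I expect the real work to lie. First, one must note $\Pi\ne\emptyset$ (otherwise the statement is vacuous), which is part of the standing assumption that the EAN admits a feasible timetable. Second, and more delicate, one must verify that $\tilde f$ is a genuine finite-valued function, so that the finite minimum is meaningful: for fixed $p$ and $s$ the inner $\min_{d\in\D}\tau$ must be attained, and $\sup_{s\in\SD}$ must be finite. For the inner problem I would argue that, once the integer variables $z_a$ of \eqref{P-DM-3} are fixed, the nonnegatively weighted linear objective is bounded below (since $d_i\ge s_i\ge0$ and \eqref{P-DM-4} bounds each $d_a$ from below) and is minimized by the componentwise-minimal feasible delay vector — the standard critical-path delay propagation — so the minimum is attained; moreover only finitely many values of $z_a$ are relevant, as larger $|z_a|$ merely inflate $d_a$ and hence the cost. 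For the supremum I would use that $\SD$ is bounded ($s_i\le\sigma$, $\|s\|_1\le\rho$): bounded source delays propagate to bounded $d$ along finitely many activities, and each missed change contributes at most $w_aT$, so $f(\pi)<\infty$ for every $\pi$. Establishing this attainment of the inner minimum and finiteness of the value — rather than the essentially bookkeeping finiteness of $P'$ — is the main obstacle; the outer existence claim then follows immediately.
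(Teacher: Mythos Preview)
Your argument is correct and follows the same skeleton as the paper's proof: show that the inner value $f(\pi)=\sup_{s\in\SD}\min_{d\in\D(\pi,s)}\tau(\pi,d)$ is a well-defined finite number for every feasible $\pi$, and then use that only finitely many timetables need to be compared. The paper obtains the finiteness of $\Pi$ more directly, simply observing that by periodicity every $\pi_i$ may be taken in $\{0,\dots,T-1\}$; your factoring through the duration vector $(\pi_a)_{a\in\A}$ is a nice refinement of this, but not essential for the existence claim. The genuine difference lies in how the finiteness of $f(\pi)$ is argued: the paper notes that $\varphi(\pi,s)=\min_{d\in\D(\pi,s)}\tau(\pi,d)$ is concave in $s$ (as an infimum of linear functions) and invokes compactness of $\SD$, whereas you bound the propagated delays directly from the boundedness of the source delays and handle the integer $z_a$ by a case argument. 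Your route is more elementary and arguably more self-contained---the paper is rather terse about why concavity plus compactness already gives finiteness---while the paper's version is shorter. Either way the conclusion is the same.
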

\begin{proof}
The objective function $\tau$ of \eqref{RPT} is linear in $d\in\D(\pi,s)$ as it just sums up the $d$-variables. The infimum of concave functions (i.e., especially linear functions) is again concave. Hence $\varphi(\pi,s) := \min_{d\in\mathcal{D}(\pi, s)} \tau(\pi,d)$ is concave. From the concavity of $\varphi$ and compactness of $\SD$ we derive that $\sup_{s\in\SD} \varphi(\pi,s)$ is finite. Since $\Pi$ is a finite set (all $\pi_i$ can be assumed to lie in $[0,T-1]$ due to periodicity), we can enumerate all timetables and determine the one with the smallest supremum. This yields the minimal solution to \eqref{RPT}.
\end{proof}

Thus, there exists an optimal timetable $\pi$ (although there might not exist a corresponding worst-case scenario $s\in\SD$ since $\varphi(\pi,s)$ might not be continuous due to the $z$-variables hidden in $\D$), but it is highly unlikely to determine it in reasonable time. We hence need to simplify \eqref{RPT} in order to achieve our goal of finding robust periodic timetables. To this end, we propose two different simplifications.

\begin{itemize}
\item[(A)] We assume that the strategy for delay management can be expressed in terms of timetable and scenario. This can be done, for example, by enforcing a no-wait policy for trains. With the fixed strategy we can transform the inner max-min problem of finding the worst-case scenario to a mixed-integer maximization problem. The remaining min-max problem can then be solved by a cutting-planes approach for robust optimization problems.
\item[(B)] We find a solution to the inner max-min problem heuristically, e.g., by sampling scenarios until a bad-case (not worst-case) scenario is found. We can then solve an integrated timetabling-delay-management problem with respect to some finite scenario set $\SD'\subset \SD$ and iteratively increase the scenario set until we are not able to find a worse scenario for the currently best timetable.
\end{itemize}

We can show that the first simplification, if solved to optimality, leads to an upper bound of \eqref{RPT}, whereas the second simplification leads to a lower bound on \eqref{RPT}. In the following we describe these two approaches in more detail.

\subsection{Simplification A: Fixed Delay Management Strategy}

To carry out the first approach, we fix our delay management strategy. In this paper, we chose the no-wait strategy (which was successfully done in \cite{LSSSP07} for minimizing the expected delay of a periodic timetable), meaning that no train waits for delayed passengers from other trains. Thus, delays are only propagated along driving and waiting activities, leading to the following form of $d$ (see also \cite{anitahabil}). Formally:

\begin{de}\label{de:no-wait}
Assume an EAN ($\E,\A$), a timetable $\pi$, and some source delays $s$ and let $\Es\subset\E$ be the set of events in the EAN such that every $e\in\Es$ has no incoming driving or waiting activity. The \emph{no-wait strategy} for delay management returns a solution $d$ to \eqref{P-DM} such that
\[
 d_j =
\begin{cases}
s_j, & j\in\Es,\\
\max \{d_i + s_a - (\pi_a - L_a), s_j\}, & (i,j)=a\in \A^{dw},
    \end{cases}
\]
holds for all events $i\in\E$. We denote the set of all feasible $d$ for which this property holds as $\D'(\pi,s)\subseteq\D(\pi,s)$.
\end{de}
The explanation of this definition is as follows: The first event of every line of the underlying line concept has no incoming driving or waiting activity. Hence, there is no delay that could possibly be propagated. Accordingly, we can set the propagated delay to its source delay, i.e., $d_i = s_i$ for $i\in\Es$. Now we propagate this delay along the line: The next event $j$ has an incoming driving activity $a=(i,j)$. We know that event $i$ is delayed by $d_i$ and that we have $\pi_a - L_a$ slack on the activity (meaning that we can save $\pi_a-L_a$ time by driving faster), but there also exists a source delay $s_a$ on this driving activity. By utilizing the slack of activity $a$ we can hence change the propagated delay to $d_i + s_a - (\pi_a - L_a)$ units of time. After having saved time, we encounter the source delay $s_j$ on event $j$ which leads to a propagated delay of $d_j = \max\{d_i + s_a - (\pi_a - L_a), s_j\}$ since $d_j$ is constrained by $d_j \geq s_j$ (from \eqref{P-DM}, i.e., we are not allowed to schedule events earlier than in the nominal timetable). Now the propagated delays $d_j$ are chosen such that the train drives along its line as fast as possible, potentially ignoring passengers on change activities, and thereby yielding the no-wait strategy.

We now can formulate a program that determines a scenario $s\in\SD$ and show that this program finds a scenario that is indeed a worst-case scenario for the no-wait strategy.

\begin{de}\label{de:f-wc}
Given a timetable $\pi\in\Pi$, we can find a scenario $s\in\SD$ regarding the no-wait strategy by solving
\begin{align}\label{F-WC}\tag{F-WC($\pi$)}
& \max & & \tau(\pi,d)\\
& \text{s.t.} & d_i &= s_i \quad &&\forall i \in \Es, \label{F-WC-1}\\
& & d_j &\geq d_i + s_a - (\pi_a - L_a)\quad &&\forall a=(i,j) \in \A^{dw},\label{F-WC-2}\\
& & -M z_a + d_j &\leq d_i + s_a - (\pi_a - L_a) \quad &&\forall a=(i,j) \in \A^{dw},\label{F-WC-3}\\
& & d_j &\leq M(1 - z_a) + s_j \quad &&\forall a=(i,j) \in \A^{dw}, \label{F-WC-4}\\
& & \pi_a + d_a&\leq L_a +  T  - 1\quad &&\forall a \in \A^{ch},\label{F-WC-5}\\
& & z_a& \in\{0,1\} &&\forall a\in\A^{dw},\nonumber\\
& & d & \in \D(\pi, s), && \nonumber \\
& & s & \in S. && \nonumber
\end{align}
\end{de}

\begin{theo}
A scenario found by solving \eqref{F-WC} yields a worst-case scenario for the no-wait strategy, i.e.,
\[\label{theo-fwc}
\max_{s\in\SD}\min_{d\in\D'(\pi,s)} \tau(\pi,d)  \Leftrightarrow \eqref{F-WC}.
\]
\end{theo}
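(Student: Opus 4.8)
The plan is to establish the claimed equivalence by showing that, for every fixed scenario $s\in\SD$, the feasible delay vector of \eqref{F-WC} is precisely the no-wait solution attaining $\min_{d\in\D'(\pi,s)}\tau(\pi,d)$; once this holds, maximizing $\tau$ jointly over $(s,d,z)$ in \eqref{F-WC} reproduces exactly the outer maximization $\max_{s\in\SD}\min_{d\in\D'(\pi,s)}\tau(\pi,d)$. The starting observation is that, by Definition \ref{de:no-wait}, the no-wait recursion determines the event delays $d_i$ uniquely from $s$ (propagating from each $\Es$-event along the node-disjoint line paths of $\A^{dw}$), so that $\D'(\pi,s)$ is a singleton in its event- and driving/waiting-components and retains freedom only in the integers $z_a$ on change activities. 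Consequently the inner minimization over $\D'(\pi,s)$ reduces to choosing, for each change activity, the integer $z_a$ that minimizes the term $w_a(\pi_a+d_a)$.

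Second, I would verify that constraints \eqref{F-WC-1}--\eqref{F-WC-4}, together with $d\in\D(\pi,s)$ (which enforces $d_i\geq s_i$ via \eqref{P-DM-1}), constitute a big-$M$ linearization of the recursion $d_j=\max\{d_i+s_a-(\pi_a-L_a),\,s_j\}$. Writing $A_a:=d_i+s_a-(\pi_a-L_a)$ and $B_j:=s_j$, constraint \eqref{F-WC-2} gives $d_j\geq A_a$ and the $\D$-constraint gives $d_j\geq B_j$, so $d_j\geq\max\{A_a,B_j\}$; the case split on the binary $z_a$ in \eqref{F-WC-3}--\eqref{F-WC-4} then forces $d_j\leq A_a$ (when $z_a=0$) or $d_j\leq B_j$ (when $z_a=1$). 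For $M$ sufficiently large, these disjunctive constraints are feasible exactly when $d_j=\max\{A_a,B_j\}$, and conversely every no-wait event-delay vector is attained by the appropriate choice of $z_a$. This is the step requiring the most care: one must confirm that no spurious value of $d_j$ survives the relaxation and that $M$ can be fixed a priori large enough (bounded in terms of $\sigma$, $\rho$, $T$ and the slacks $\pi_a-L_a$).

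Third, I would show that \eqref{F-WC-5} pins down the change-activity delays to exactly the minimizer identified by the inner problem. Since $s_a=0$ on change activities, the $\D$-constraint \eqref{P-DM-4} forces $\pi_a+d_a\geq L_a$, while \eqref{F-WC-5} imposes $\pi_a+d_a\leq L_a+T-1$; together these trap $\pi_a+d_a$ in a window of length $T$, determining the integer $z_a$ — and hence $d_a=d_j-d_i+z_aT$ — uniquely as the smallest feasible value. Because each $w_a\geq 0$, this smallest value is precisely the one minimizing $w_a(\pi_a+d_a)$, so it coincides with the inner minimizer from the first paragraph.

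Combining the three steps, for each fixed $s$ all feasible points of \eqref{F-WC} share the same delay vector $d^{\mathrm{nw}}(s)\in\D'(\pi,s)$, which achieves $\min_{d\in\D'(\pi,s)}\tau(\pi,d)$, and $s$ ranges over all of $\SD$. Maximizing $\tau$ over the feasible region of \eqref{F-WC} therefore equals $\max_{s\in\SD}\min_{d\in\D'(\pi,s)}\tau(\pi,d)$, which is the asserted equivalence. The genuinely delicate point, as flagged, is the correctness and well-posedness of the big-$M$ encoding of the maximum; the remaining implications follow directly from the definition of the no-wait strategy and of $\D(\pi,s)$.
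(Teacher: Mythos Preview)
Your proposal is correct and follows essentially the same approach as the paper: both arguments observe that the no-wait recursion determines the event delays uniquely from $s$, linearize the $\max$ in Definition~\ref{de:no-wait} via the big-$M$ disjunction \eqref{F-WC-2}--\eqref{F-WC-4}, and use \eqref{F-WC-5} to pin the change-activity delays to their minimizing values, so that the inner $\min$ collapses and only the outer maximization over $\SD$ remains. Your presentation is slightly more explicit about the case split on $z_a$ and about why \eqref{F-WC-5} selects the minimizer, and you flag the need to bound $M$ a priori, which the paper leaves implicit; but the logical structure and key ideas coincide.
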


\begin{proof}
First, it is worth mentioning that \eqref{F-WC} is indeed a mixed-integer linear program: The set $\SD$ has only linear constraints and also $d\in\D(\pi, s)$ is linear as it is only a short way of repeating constraints \eqref{P-DM-1}-\eqref{P-DM-4}. For showing the equivalence we rewrite the left side of \eqref{theo-fwc} with the help of Definition \ref{de:no-wait}:
\begin{align*}
\max_{s\in\SD}\min_{d\in\D'(\pi,s)}\tau(\pi,d) & \Leftrightarrow & \label{inter} \tag{*}\max_{s\in\SD} \quad & \min_{d\in\D(\pi,s)} \tau(\pi,d) & \\
& & \text{s.t.} \quad & d_i = s_i& \forall & i\in\Es,\\
& & & d_j = \max\{d_i + s_a - (\pi_a - L_a), s_j\} & \forall & (i,j)=a\in\A^{dw}.
\end{align*}
Now we can linearize the $\max$ operator by introducing a big $M$ and auxiliary variables $z_a$:
\begin{align*}
\eqref{inter}\quad \Leftrightarrow \quad \max_{s\in\SD} \quad & \min \;\tau(\pi,d)\tag{**}\\
\text{s.t.}\quad & d_i = s_i &\forall & i\in\Es \\
& d_j \leq (1-z_a) M + s_j &\forall & (i,j)=a\in\A^{dw},\\
& d_j \geq d_i + s_a - (\pi_a - L_a) &\forall & (i,j)=a\in\A^{dw},\\
&-z_a M + d_j  \leq d_i + s_a - (\pi_a - L_a)& \forall &(i,j)=a\in\A^{dw},\\
& z_a\in\{0,1\} &\forall &a\in \A^{dw},\\
& d\in\D(\pi,s). & &
\end{align*}
Note that $d_j\geq s_j \; \forall\; j\in\E$ is ensured by $d\in\D(\pi,s)$.

Finally, we can restrict $\pi_a + d_a \in [L_a, L_a + T - 1]$ for all changing activities $a\in\A^{ch}$ since the $d_a$ are minimized and do not impact any other constraints. Hence
\[ \tag{\ref{F-WC-5}}
\pi_a + d_a \leq L_a + T - 1 \quad \forall a \in \A^{ch}
\]
can be added to the model. With this final constraint at hand it is left to show that the $\min$ operator can be dropped. This is the case because at this point the $d$- and $z$-variables are determined uniquely for every $s\in\SD$. 

To show this, consider that if we fix some $s\in\SD$, the $d_i$ for $i\in\Es$ are determined to be equal to $s_i$. Then, starting from these start events $\Es$, the delays propagate along all driving and waiting activities uniquely via 
$d_j =\max\{d_i + s_a - (\pi_a - L_a), s_j\}$ for all $(i,j)=a\in\A^{dw}$. Hence, all $d_i$ for $i\in\E$ and also $d_a$ (and $z_a$) for $a\in\A^{dw}$ are determined uniquely. Additionally, \eqref{F-WC-5} determines $d_a$ (and $z_a$) for all $a\in\A^{ch}$.

Thus, the inner minimum can be dropped as (by uniqueness) there exists exactly one feasible set of variables $(d,z)\in\E\times\A\times\A$ leading to $(**)\Leftrightarrow\eqref{F-WC}$ and thereby to the Theorem's statement.
\end{proof}
Note that in \eqref{F-WC} we can write $\max$ instead of $\sup$ as opposed to in the notation in \eqref{RPT}: Since the sum of all source delays is bounded, the propagated delays are also bounded. Hence we can estimate an upper bound on the modulo parameters $z_a$ for all $a\in\A^{ch}$ (because the difference $d_j - d_i$ is also bounded). In doing so, we can enumerate all possible combinations of values for integer variables. For each of these enumerations we solve a linear optimization problem with no integer variables and the maximal of these finite number of different solutions is the maximal solution.

With the knowledge that \eqref{F-WC} yields a worst-case scenario for a timetable $\pi$ -- keeping in mind that the delay management strategy is fixed -- we can define the following reduced version of the overall problem \eqref{RPT}.

\begin{de}
Assuming the no-wait strategy, problem \eqref{RPT} reduces to
\begin{equation}\label{F-RPT}\tag{F-RPT}
\min_{\pi\in\Pi} \text{F-WC($\pi$)}.
\end{equation}
which can be reformulated as
\begin{align*}\tag{F-RPT($\SD$)}
& \min & & t\\
& \text{s.t.}
& t &\geq \tau(\pi,d_s) \quad & \forall s \in \SD,\\
& & d_s &\in \D'(\pi,s) & \forall s \in \SD,\\
& & \pi &\in \Pi,\; t\in\R. &
\end{align*}
\end{de}

If $|\SD|$ is finite, then \eqref{F-RPT} is a linear mixed-integer minimization problem, which is known to be solvable. If $|\SD|$ is infinite, on the other hand, \eqref{F-RPT} can be solved via cutting planes, as illustrated in Algorithm \ref{f-cp} and Figure \ref{basic-scheme}. A thorough investigation of cutting plane algorithms for min-max problems and convergence proofs are given in \cite{PS18}. The authors also propose speed-up techniques for the cutting plane approach making use of the fact that the single robustification and pessimization steps do not necessarily need to be solved to optimality. We used some of these insights in our implementation and will explain them further in Section \ref{sec:exp}.

\begin{algorithm}[h!]
	\DontPrintSemicolon
	\KwIn{EAN ($\E,\A$), nominal scenario $s_{\text{nom}} \in \SD$, stopping criterion $\epsilon > 0$
}
	\KwOut{$\epsilon$-optimal solution $\pi\in \Pi(\E,\A)$ to \eqref{F-RPT}, upper bound $ub_k$ to \eqref{RPT}}
	$\SD_0 \gets \{s_{\text{nom}}\}$, $k \gets 0$, ${lb}_0 \gets -\infty$, ${ub}_0\gets \infty$\;
	\While{$ub_k - lb_k > \epsilon$}{
%        \medskip
		$(\pi_k, d) \gets$ solution to F-RPT($\SD_k$)\;
	    $lb_{k+1} \gets \tau(\pi_k,d)$ \;
%		\medskip
        $(s_k, d)\gets$ solution to F-WC($\pi_k$)\;
        ${ub}_{k+1}\gets \min(ub_k, \tau(\pi_k,d))$\;
%        \medskip
        $\SD_{k+1} \gets \SD_k \cup \{ s_k\}$\; 
 		$k \gets k+1$\; 
		}
\Return $\pi_k, ub_k$\;
\caption{\textbf{Cutting Plane Approach}}
\label{f-cp}
\end{algorithm}

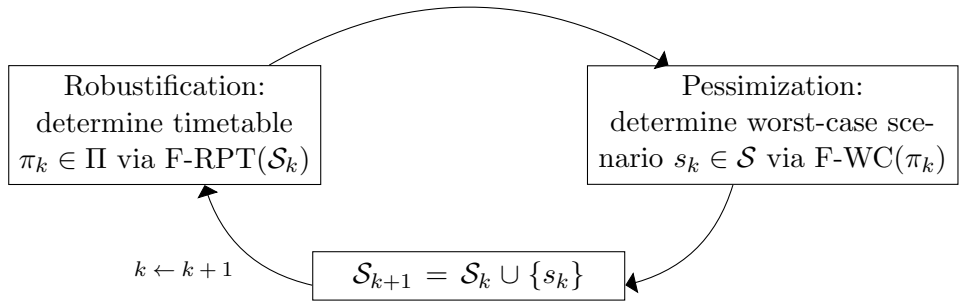
\begin{figure}[h!]
\centering
\begin{tikzpicture}[scale=0.4,auto=left,every node/.style={draw, rectangle, scale=1.0}]
  \node[text width = 12em, text centered, align=center] (pes) at (10,0)  {Pessimization: \\ determine worst-case scenario $s_k\in \SD$ via F-WC($\pi_k$)};
  \node[text width = 10em, text centered, align=center] (opt) at (-10,0)  {Robustification: \\ determine timetable $\pi_k\in \Pi$ via F-RPT($\SD_k$)};
  \node[text width = 10em, text centered, align=center] (wc) at (0,-5) {$\SD_{k+1} = \SD_k\cup\{s_k\}$};
     \draw[bend left, -triangle 90] (wc) to node[draw=none, fill=none] {\scriptsize{$k\gets k+1$}} (opt);
      \draw[bend left, -triangle 90] (opt) to node[draw=none, fill=none] {} (pes);
          \draw[bend left, -triangle 90] (pes) to node[draw=none] {} (wc);
\end{tikzpicture}
\caption{Cutting Plane Approach for Simplification A}\label{basic-scheme}
\end{figure}

%\begin{lemma}
%For every $\epsilon>0$, Algorithm \ref{f-cp} finds a solution with objective $z^*$ such that for the optimal objective value $z^{opt}$ of F-RPT($\SD$) it holds $\|z^* - z^{opt}\| \leq \epsilon$.
%\end{lemma}
%\begin{proof}
%As shown in \cite{PS18} in Lemma 5, we get convergence of Algorithm \ref{f-cp} already by finiteness of $\Pi$, since we can restrict $\pi_i$ to the integers $\{0,...,T-1\}$ for all $i\in\E$.
%\end{proof}

With \eqref{F-RPT} we have found a simplification of \eqref{RPT}, which is not only solvable, but also yields an upper bound for \eqref{RPT}.

\begin{lemma}
\eqref{F-RPT} is an upper bound for \eqref{RPT}.
\end{lemma}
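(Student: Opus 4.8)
The plan is to exploit the single structural fact that distinguishes \eqref{F-RPT} from \eqref{RPT}, namely the restriction of the inner feasible set from $\D(\pi,s)$ to the smaller set $\D'(\pi,s)$ dictated by the no-wait strategy. By Definition \ref{de:no-wait} we have $\D'(\pi,s)\subseteq\D(\pi,s)$ for every timetable $\pi\in\Pi$ and every scenario $s\in\SD$, and minimizing a fixed objective over a smaller feasible region can only increase (weakly) its value. This monotonicity, propagated through the outer $\sup$ and $\min$, will deliver the claimed inequality $\eqref{RPT}\leq\eqref{F-RPT}$.

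Concretely, I would first fix an arbitrary $\pi\in\Pi$ and $s\in\SD$ and record the pointwise estimate
\[
\varphi(\pi,s)=\min_{d\in\D(\pi,s)}\tau(\pi,d)\;\leq\;\min_{d\in\D'(\pi,s)}\tau(\pi,d),
\]
which holds precisely because $\D'(\pi,s)\subseteq\D(\pi,s)$. Taking the supremum over $s\in\SD$ on both sides preserves this inequality (the supremum of a pointwise-smaller function is smaller), so that
\[
\sup_{s\in\SD}\varphi(\pi,s)\;\leq\;\sup_{s\in\SD}\min_{d\in\D'(\pi,s)}\tau(\pi,d).
\]
By the preceding Theorem the right-hand side is exactly the optimal value of \eqref{F-WC}, i.e.\ $\text{F-WC($\pi$)}$, where the supremum is in fact attained as a maximum. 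Finally, taking the minimum over $\pi\in\Pi$ on both sides --- again a monotone operation --- yields
\[
\eqref{RPT}=\min_{\pi\in\Pi}\sup_{s\in\SD}\varphi(\pi,s)\;\leq\;\min_{\pi\in\Pi}\text{F-WC($\pi$)}=\eqref{F-RPT},
\]
which is the assertion.

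I do not expect a genuine obstacle here; the argument is a routine ``shrinking the feasible set raises the inner value'' estimate. The only point that deserves a moment's care is the mismatch between the $\sup$ appearing in \eqref{RPT} and the $\max$ guaranteed by the Theorem for \eqref{F-WC}: since the relevant inequality is already a pointwise one between the two inner-minimum functions of $s$, it survives under the supremum irrespective of whether either side is attained, so the $\sup$-versus-$\max$ distinction never actually needs to be resolved. One should also observe that the inequality is one-directional by design --- the no-wait restriction generally makes the adversary weaker and the timetable's guaranteed value larger, so \eqref{F-RPT} can strictly exceed \eqref{RPT}, consistent with its role as an upper bound.
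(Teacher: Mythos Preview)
Your proof is correct and follows essentially the same route as the paper: both rely on the single observation $\D'(\pi,s)\subseteq\D(\pi,s)$ and then propagate the resulting pointwise inequality through the outer $\sup$ and $\min$. Your write-up is in fact more careful than the paper's one-line argument, which contains a typographical slip (the displayed $\leq$ there should read $\geq$ for the chain to establish an \emph{upper} bound); your version has the inequality oriented correctly throughout.
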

\begin{proof}
It holds that
\begin{align*}
\eqref{F-RPT} \Leftrightarrow \min_{\pi\in\Pi} \eqref{F-WC} & \Leftrightarrow \min_{\pi\in\Pi}\max_{s\in\SD}\min_{d\in\D'(\pi,s)}\tau(\pi,d) \\
&\leq \min_{\pi\in\Pi}\sup_{s\in\SD}\min_{d\in\D(\pi,s)}\sum_{a\in\A}\tau(\pi, d) \Leftrightarrow \eqref{RPT}.
\end{align*}
\end{proof}

Hence we have found a relaxation of \eqref{RPT}. Before investigating it computationally in Section \ref{sec:exp}, we focus on the second approach that gives us a lower bound on \eqref{RPT}.

\subsection{Simplification B: Finding Bad-Case Scenarios}

If we do not want to restrict ourselves to a fixed delay management strategy as in Simplification A, we can alternatively shrink the space $\SD$ and consider only a finite number of scenarios. This problem yields a mixed-integer program:

\begin{lemma}
\RPTS~with finite $|\SD'|$ is a mixed-integer program.
\end{lemma}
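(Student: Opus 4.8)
The plan is to write \RPTS~out explicitly and exploit that a maximum over a \emph{finite} set can be linearized by an epigraph variable, after which the inner minimization merges harmlessly with the outer one. With $\SD$ replaced by the finite set $\SD'$, the problem \RPTS~reads
\[
\min_{\pi\in\Pi}\,\max_{s\in\SD'}\,\min_{d\in\D(\pi,s)}\tau(\pi,d),
\]
and I enumerate $\SD'=\{s^1,\dots,s^k\}$ with $k=|\SD'|<\infty$. Introducing an epigraph variable $t\in\R$ for the finite maximum gives the equivalent form
\begin{align*}
\min_{\pi,t}\quad & t\\
\text{s.t.}\quad & t\geq \min_{d\in\D(\pi,s^j)}\tau(\pi,d)\qquad\forall j\in\{1,\dots,k\},\\
& \pi\in\Pi,\ t\in\R.
\end{align*}

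Next I would eliminate the inner minima. The crucial observation is that $t\geq\min_{d\in\D(\pi,s^j)}\tau(\pi,d)$ holds if and only if there exists a single feasible vector $d^j\in\D(\pi,s^j)$ with $\tau(\pi,d^j)\leq t$, since the minimal objective value is at most $t$ exactly when some feasible delay vector attains a value not exceeding $t$. I therefore attach to each scenario $s^j$ its own copy $d^j$ of the delay variables (together with the integer modulo variables hidden inside $\D(\pi,s^j)$) and obtain
\begin{align*}
\min_{\pi,t,(d^j)_j}\quad & t\\
\text{s.t.}\quad & t\geq \tau(\pi,d^j)\qquad\forall j\in\{1,\dots,k\},\\
& d^j\in\D(\pi,s^j)\qquad\forall j\in\{1,\dots,k\},\\
& \pi\in\Pi,\ t\in\R.
\end{align*}
Since the copies are decoupled across scenarios -- each $d^j$ appears only in its own two constraints -- minimizing $t$ drives every $\tau(\pi,d^j)$ down to $\min_{d\in\D(\pi,s^j)}\tau(\pi,d)$, so for each fixed $\pi$ the optimal $t$ equals $\max_j\min_{d}\tau(\pi,d)$. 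This establishes equivalence with \RPTS.

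It then remains to check that this reformulation is a genuine mixed-integer \emph{linear} program. The objective $t$ is linear; $\tau(\pi,d^j)=\sum_{a\in\A}w_a(\pi_a+d^j_a)+\sum_{i\in\Ed}w_id^j_i$ is linear in $(\pi,d^j)$ once $\pi_a$ is written as $\pi_j-\pi_i+z_aT$ through the integer \eqref{PESP} variables; membership $\pi\in\Pi$ is described by the linear constraints of \eqref{PESP} with integral $z_a$ and $\pi_i$; and each $d^j\in\D(\pi,s^j)$ unfolds into the linear constraints \eqref{P-DM-1}--\eqref{P-DM-4} with integral modulo variables. Hence all constraints are linear, the only integral variables are the $\pi_i$ and the various $z$-variables, and the model has finite size -- one timetable block plus $k$ delay blocks -- precisely because $|\SD'|=k<\infty$.

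The main obstacle, and really the only nontrivial point, is justifying that the inner minimization can be absorbed rather than quantified away: a priori $t\geq\min_d\tau$ is not a linear constraint. The resolution is the epigraph equivalence above, which works only because the outer problem over $\pi$ and the inner problem over $d$ are \emph{both} minimizations and hence cooperate; had the inner stage been the worst-case maximization of the full \eqref{RPT}, this single-representative trick would fail and no finite MIP of this form would result. I would flag this as the step a careful reader should scrutinize.
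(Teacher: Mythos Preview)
Your proof is correct and follows essentially the same approach as the paper: introduce an epigraph variable $t$ for the finite maximum, replace each constraint $t\geq\min_{d\in\D(\pi,s)}\tau(\pi,d)$ by scenario-specific copies $d_s\in\D(\pi,s)$ with $t\geq\tau(\pi,d_s)$, and then observe that $\tau$ and the constraints defining $\Pi$ and $\D$ are linear. Your write-up is in fact more careful than the paper's in justifying the key equivalence step (why the inner minimum can be replaced by an existential witness) and in spelling out where the integer variables live.
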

\begin{proof}
We start by rewriting
\begin{align*}
\eqref{RPT}(\SD') \; \Leftrightarrow \; \min_{\pi\in\Pi}\max_{s\in\SD'} \min_{d\in\D(\pi,s)} \tau(\pi,d) \, \Leftrightarrow \quad & \min & t & & \tag{*}\\
& \text{s.t.} & t & \geq \min_{d\in\D(\pi,s)}\tau(\pi,d)  & \forall s \in \SD',\\
& & \pi & \in \Pi,&\\
& & t &\in\R, &
\end{align*}

and by creating $|\SD'|$ duplicates of $d$-variables we get

\begin{align*}
(*) \quad \Leftrightarrow \quad & \min & t & & \\
& \text{s.t.} & t & \geq \tau(\pi,d_s)  & \forall s \in \SD',\\
& & d_s & \in \D(\pi,s)& \forall s\in\SD',\\
& & \pi & \in \Pi,&\\
& & t & \in \R.&\\
\end{align*}
Linearity of $\pi$ and $d_s$ in $\tau$ and $\D$ yield the statement.
\end{proof}

Solving this problem furthermore gives a lower bound on \eqref{RPT}.

\begin{lemma}
\eqref{RPT}($\SD'$) is a lower bound on \eqref{RPT}($\SD$) if $\SD'\subseteq\SD$.
\end{lemma}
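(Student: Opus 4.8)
The plan is to prove this purely by monotonicity of the supremum under set inclusion, without ever exchanging the order of the operators $\min_\pi$, $\sup_s$, $\min_d$. First I would abbreviate the inner two operations by setting
\[
\varphi(\pi,s) \coloneqq \min_{d\in\D(\pi,s)}\tau(\pi,d),
\]
which is well defined for every $\pi\in\Pi$ and every $s\in\SD$ (hence in particular for every $s\in\SD'\subseteq\SD$), so that the two problems read $\eqref{RPT}(\SD)=\min_{\pi\in\Pi}\sup_{s\in\SD}\varphi(\pi,s)$ and $\eqref{RPT}(\SD')=\min_{\pi\in\Pi}\sup_{s\in\SD'}\varphi(\pi,s)$.

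The central step is the elementary observation that enlarging the index set of a supremum can only increase its value. I would fix an arbitrary $\pi\in\Pi$; because $\SD'\subseteq\SD$, every value $\varphi(\pi,s)$ with $s\in\SD'$ is also one of the values over which the right-hand supremum is taken, whence
\[
\sup_{s\in\SD'}\varphi(\pi,s)\;\leq\;\sup_{s\in\SD}\varphi(\pi,s).
\]
This inequality holds for each fixed $\pi$ separately, so taking the minimum over $\pi\in\Pi$ on both sides preserves it and yields
\[
\min_{\pi\in\Pi}\sup_{s\in\SD'}\varphi(\pi,s)\;\leq\;\min_{\pi\in\Pi}\sup_{s\in\SD}\varphi(\pi,s),
\]
which is exactly $\eqref{RPT}(\SD')\leq\eqref{RPT}(\SD)$.

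There is essentially no hard part: the statement is pure monotonicity and needs neither a min--sup interchange nor convexity nor the mixed-integer structure hidden in $\D$. The only two points I would be careful about are (i) that the inner minimum $\varphi(\pi,s)$ is actually attained for every relevant $s$, which holds for $s\in\SD$ by the discussion preceding the existence lemma and transfers verbatim to $s\in\SD'$ since $\SD'\subseteq\SD$; and (ii) that both suprema are finite, so the inequality is not a vacuous ``$\infty\leq\infty$''. Finiteness of $\sup_{s\in\SD}\varphi(\pi,s)$ was already established from the concavity of $\varphi$ together with the compactness of $\SD$, and finiteness of the supremum over the subset $\SD'$ is then immediate from the displayed inequality.
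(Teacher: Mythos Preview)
Your proof is correct and follows essentially the same approach as the paper's own proof: fix $\pi\in\Pi$, use monotonicity of the supremum under the inclusion $\SD'\subseteq\SD$ to get the pointwise inequality, and then pass to the minimum over $\Pi$. The paper's argument is simply a terser version of yours, omitting the auxiliary notation $\varphi$ and the well-definedness remarks.
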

\begin{proof}
Fix a $\pi\in\Pi$. We get 
\[
\max_{s\in\SD'}\min_{d\in\D(\pi,s)}\tau(\pi,d) \leq \max_{s\in\SD}\min_{d\in\D(\pi,s)}\tau(\pi,d),
\]
and since this holds for each $\pi\in\Pi$ the above result is true.
\end{proof}

The basic idea of our proposed solution algorithm to this second simplification is quite similar to the cutting plane algorithm in the previous subsection: We solve a relaxed version of \eqref{RPT}, namely \eqref{RPT} on the finite scenario set $\SD_k\subset \SD$. Once we are given a solution $\pi_k$ to this problem, we try and find a new scenario $s_k$. Desired properties for this scenario are that it is not contained in $\SD_k$ and that \eqref{P-DM} on $(\pi, s_k)$ yields a worse objective value than \eqref{P-DM} on $(\pi,s)$ for any $s\in\SD_k$. As mentioned earlier in this paper, an exact algorithm for determining a worst-case scenario has not yet been found. In our implementation, we will try to find worst-case scenarios by sampling many scenarios and solving \eqref{P-DM} on each of them. This idea is summarized in Algorithm \ref{f-cp-2} and Figure \ref{basic-scheme-2}.

\begin{algorithm}[h!]
	\DontPrintSemicolon
	\KwIn{EAN ($\E,\A$), nominal scenario $s_{\text{nom}} \in \SD$, number of iterations $N\in\N$, number of sampled scenarios $M\in\N$ 
}
	\KwOut{Optimal Solution $\pi_k$ to \eqref{RPT}($\SD_k$) with $\SD_k\subseteq\SD$, lower bound $lb$ to \eqref{RPT}}
	$\SD_1 \gets \{s_{\text{nom}}\}$\;
	\For{$k=1,\dots,N$}{
		$(\pi_k, d^*) \gets $ solution to RPT($\SD_k$)\;
	    $lb \gets \tau(\pi_k, d^*)$\;
	    $ub\gets 0$\;
	    \For{$i=1,\dots,M$}{
	        $s\gets$ sampled from $\SD$\;
	        $d\gets$ solution to \eqref{P-DM}$(\pi_k, s)$\;
	        \If{$\tau(\pi_k,d) > ub$ and $s_k\not\in\SD_k$}{
	        $ub\gets \tau(\pi_k,d)$\;
	        $s_k\gets s$ \;
            }
	   }
        $\SD_{k+1} \gets \SD_k \cup \{ s_k\}$\; 
		}
\Return $\pi_k, lb$\;
\caption{\textbf{Iterative Improvement Heuristic}}
\label{f-cp-2}
\end{algorithm}

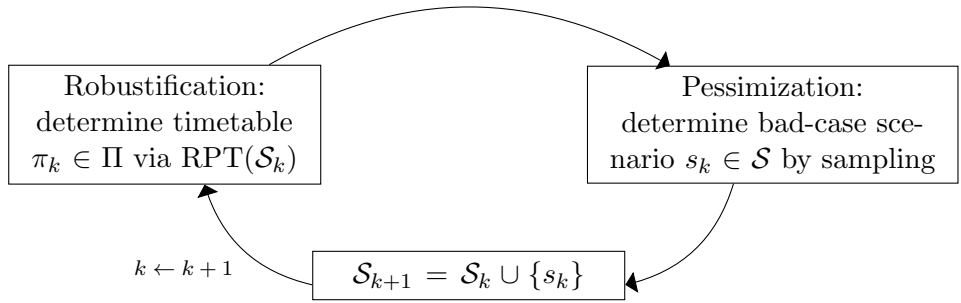
\begin{figure}[h!]
\centering
\begin{tikzpicture}[scale=0.4,auto=left,every node/.style={draw, rectangle, scale=1.0}]
  \node[text width = 12em, text centered, align=center] (pes) at (10,0)  {Pessimization: \\ determine bad-case scenario $s_k\in \SD$ by sampling};
  \node[text width = 10em, text centered, align=center] (opt) at (-10,0)  {Robustification: \\ determine timetable $\pi_k\in \Pi$ via RPT($\SD_k$)};
  \node[text width = 10em, text centered, align=center] (wc) at (0,-5) {$\SD_{k+1} = \SD_k\cup\{s_k\}$};
     \draw[bend left, -triangle 90] (wc) to node[draw=none, fill=none] {\scriptsize{$k\gets k+1$}} (opt);
      \draw[bend left, -triangle 90] (opt) to node[draw=none, fill=none] {} (pes);
          \draw[bend left, -triangle 90] (pes) to node[draw=none] {} (wc);
\end{tikzpicture}
\caption{Iterative Approach for Simplification B}\label{basic-scheme-2}
\end{figure}

Algorithm \ref{f-cp-2} hence gives a lower bound on \eqref{RPT}, but its convergence to the optimal solution of \eqref{RPT} is not guaranteed. The computational experiments show, however, that this scheme -- despite being a heuristic -- already yields good results.

\section{Computational Experiments}\label{sec:exp}
Summarizing the previous chapter, we can conclude that instead of solving \eqref{RPT} we can
\begin{itemize}
\item[(A)] Fix a delay management strategy and solve \eqref{F-RPT} via Algorithm \ref{f-cp}, %to get an upper bound on \eqref{RPT}.
\item[(B)] Find bad-case scenarios and iteratively solve \eqref{RPT}($\SD'$) with increasing $\SD'$, i.e. Algorithm \ref{f-cp-2}.% to get a lower bound on \eqref{RPT}.
\end{itemize}

In the following we describe the setup for determining and evaluating the robust timetables found by \eqref{F-RPT} and \RPTS, and compare them against timetables found by MATCH, i.e., a strong heuristic for solving \eqref{PESP} which has been introduced in \cite{PS16}. MATCH works by setting the buffer times of all waiting and driving times to zero and then heuristically merging line clusters by setting the time differences between them.

Our experiments are carried out on three different datasets which vary in size. \emph{toy} is a small artificial dataset, consisting of 8 stops and 8 edges between them. The \emph{grid} dataset is a 5 $\times$ 5 grid network with 40 edges that was created as a simplified version of the Stuttgart bus network. The final and biggest dataset, \emph{bahn}, consists of 250 stations and 326 edges between them, and represents the ICE network of Germany. Further specifications of the datasets' resulting periodic EAN as well as the parameters of their respective uncertainty sets (cf. Definition \ref{de:unc}) are given in Table \ref{tab:data}. The parameter \textit{passenger cutoff} is used for tractability reasons: The complexity of an EAN is highly influenced by the number of cycles it contains. Every changing activity produces a new cycle but does not influence the feasibility of a timetable (if they have no upper bound, as in our instances). Hence changing activities can be dropped in order to solve the model faster at the price of an inaccurate objective function (cf. \cite{goerigk2017improved} for details). The parameter \emph{passenger cutoff} specfies that all changing activities having \emph{passenger cutoff} or less passengers will be dropped, i.e., ignored by Algorithms \ref{f-cp} and \ref{f-cp-2}. Nevertheless, these changing activities are, of course, considered for the evaluation in Figure \ref{workflow} and also for plotting upper bounds in Figure \ref{plt:bounds}.
\begin{table}[h!]
\centering
\begin{tabular}{c|rrrrrrrr}
 & stops & edges & $|\E|$ & $ |\A|$ & passengers & $\rho$ & $\sigma$ & passenger cutoff\\
\hline
\emph{toy} & 8 & 8 & 88 & 81 & 2622 & 5 & 50 & 0\\
\emph{grid} & 25 & 40 & 260 & 363 & 2546 & 5 & 100 & 10\\
\emph{bahn} & 250 & 326 & 4872 & 6925 & 385868 & 10 & 5000 & 300
\end{tabular}
\caption{Dataset Specifications}\label{tab:data}
\end{table}

We test the three different algorithms according to the workflow in Figure \ref{workflow}.

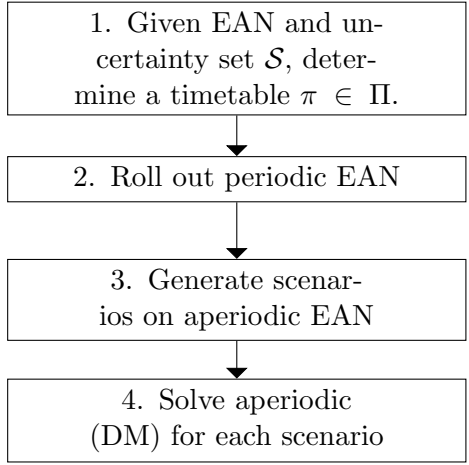
\begin{figure}[h!]
\centering
\hspace*{-8em}
\begin{tikzpicture}[scale=0.4,every node/.style={text centered,draw, text width = 15em,rectangle}]
  \node (tt) at (0, 8)  {1. Given EAN and uncertainty set $\SD$, determine a timetable $\pi\in\Pi$.};
  \node (ro) at (0, 4)  {2. Roll out periodic EAN};
\node (sc) at (0,0) {3. Generate scenarios on aperiodic EAN  };
\node (dm) at (0, -4)  {4. Solve aperiodic (DM) for each scenario};
       \draw[-triangle 90] (tt) to node[left, draw=none, fill=none] {} (ro);
     \draw[-triangle 90] (ro) to node[left, draw=none, fill=none] {} (sc);
      \draw[-triangle 90] (sc) to node[left, draw=none, fill=none] {} (dm);
\end{tikzpicture}
\caption{Workflow for testing the different timetabling algorithms}
\label{workflow}
\end{figure}

Consider the following notes to this workflow:
\begin{enumerate}
\item A timetable is retrieved by either using MATCH, Algorithm \ref{f-cp} or Algorithm \ref{f-cp-2}. For Algorithm \ref{f-cp} we additionally insert a maximal number of  $20$ iterations as well as a time limit of $60$ second for every robustification and every pessimization step. For Algorithm \ref{f-cp-2} we set the maximal number of iterations also to $20$. For retrieving a bad-case scenario for a timetable $\pi$ we sample $100$ scenarios on the periodic EAN and solve each \eqref{P-DM} with a time limit of $10$ seconds. 
\item For the rollout we choose a time horizon of $8$ hours. We hence generate an aperiodic EAN according to Definition \ref{de:rollout}, where the difference $U-L$ is set to $8$ hours.
\item We generate 10 different scenarios for each instance. For generating delays on the aperiodic EAN we, accordingly, multiply $\rho$ by the number of time periods (in our case $8$) used for the rollout. Furthermore we only generate scenarios where the maximal sum of delays is really attained, meaning that our uncertainty set actually looks like
\[
\SD = \left\{d\in \R^{|\A^{dw}|+|\E|} | 0 \leq d_i \leq \sigma, \sum_{i\in \A^{dw}\cup\E} d_i = \rho \tfrac{U-L}{T} \right\}.
\]
\item With the aperiodic EAN we create for each scenario an instance of \eqref{DM} and solve it. We define the two evaluation criteria
\begin{align*}
\text{Nominal Travel Time} &\coloneqq \frac{\sum_{a\in\A} \pi_a  w_a}{\text{\# passengers} \cdot \frac{U-L}{T}},  \\[0.5em]
\text{Delayed Travel Time} &\coloneqq \frac{\tau(\pi, d)}{\text{\# passengers}\cdot \frac{U-L}{T}},
\end{align*}
where the number of passengers is multiplied with the number of time periods in order to take into account the event and activity duplications in the rollout (cf. Definition \ref{de:rollout}).
\end{enumerate}

We then implement Algorithms \ref{f-cp} and \ref{f-cp-2} in LinTim, a software framework for public transport planning (see \cite{lintim}) using Python 3.7 and Gurobi 8.1. The tests are carried out on a standard notebook with 16 GB RAM and an Intel Core i5 processor (2$\times$ 2,3GHz). 

Returning to the speed-up techniques for cutting-plane algorithms that are presented in \cite{PS18} we make the following specifications: After preliminary testing, we set an objective value stopping criterion for F-WC($\pi_k$) of $ub_k$ (i.e., the solver can return the current solution if its objective exceeds $ub_k$). For F-RPT($\SD_k$) we decide to specify a time limit instead of an objective value stopping criterion, and for Algorithm \ref{f-cp-2} we equip (P-DM) with an objective value stopping criterion of $ub$, which sped up the sampling process significantly as it quickly discards ``good-case'' scenarios.

After running the algorithms on all generated scenarios we got $10$ different values for \emph{Delayed Travel Time} for each algorithm-instance pair which are summarized by only considering minimum, maximum and average value of \emph{Delayed Travel Time}. We retrieve the results in Table \ref{results} which are all given in minutes.

\begin{table}[h!]
\centering
\begin{tabular}{c|ccc}
& MATCH & \eqref{F-RPT} & \RPTS\\
\hline
\emph{toy} & 6.0 & 9.1 & 7.1\\
\emph{grid} & 20.4 & 24.6 & 24.1\\
\emph{bahn} & 166.4 & 177.6 & 185.8
\end{tabular}
\caption{Nominal Travel Times in Minutes}\label{nominal}
\end{table}

\begin{table}[h!]
\centering
\begin{subtable}{}
\begin{tabular}{c|ccc}
\emph{toy} & MATCH & \eqref{F-RPT} & \RPTS\\
\hline
Minimum & 12.8 & 11.0 & 10.0 \\
Maximum & 14.6 & 11.8 & 12.1 \\
Average & 13.4 & 11.3 & 11.0
\end{tabular}

\end{subtable}
~\\[1em]
%\begin{table}[h!]
%\centering
\begin{subtable}{}
\begin{tabular}{c|ccc}
\emph{grid} & MATCH & \eqref{F-RPT} & \RPTS\\
\hline
Minimum & 29.3 & 26.9 & 27.8 \\
Maximum & 30.5 & 27.4 & 28.5 \\
Average & 29.9 & 27.3 & 28.3
\end{tabular}
\end{subtable}
~\\[1em]
%\begin{table}[h!]
%\centering
\begin{subtable}{}
\begin{tabular}{c|ccc}
\emph{bahn} & MATCH & \eqref{F-RPT} & \RPTS\\
\hline
Minimum & 205.0 & 204.0 & 201.7 \\
Maximum & 206.5 & 205.4 & 202.7 \\
Average & 205.9 & 204.6 & 202.1
\end{tabular}
\end{subtable}
\caption{Delayed Travel Times in Minutes}\label{results}
\end{table}
Clearly, it can be seen that MATCH yields the best nominal travel times on all three instances. However, after carrying out delay management the travel times of MATCH are the worst among the three algorithms for all instances. \ref{F-RPT} has worse nominal travel times than MATCH, but when taking Delay Management into account the travel times are up to 15\% better than the travel times of MATCH. \RPTS~outperforms \ref{F-RPT} on \emph{toy} and \emph{bahn} with respect to the delayed travel times, but on \emph{grid} \eqref{F-RPT} has better times after DM. We can hence see that our proposed algorithms behave better delayed travel times by paying the price of worse nominal travel times. There are at least two reasons to accept this trade-off to the detriment nominal travel times: The first reason is that in our scenario we only specified realistic delay scenarios that can occur every day: Every single source delay is bounded to be at most 5 (or 10) minutes and also the sum of all source delays was chosen to be reasonable and not extremely high. Thus, only considering the nominal scenario of no delays almost never occurs and hence should not be given so much importance. It is instead more important to find a timetable that has the property of being able to cope with ``small'' delays such as those specified in the uncertainty set.
The second reason is that the propagated delays for MATCH are much larger than for \ref{F-RPT} or \RPTS. Considering \emph{bahn}, MATCH yields an average delay of about $40$ minutes per passenger (\emph{Average Delayed Travel Time} minus \emph{Nominal Travel Time}), whereas  \ref{F-RPT} yields $27$ minutes and \RPTS~only $16$ minutes, cf. Tabular \ref{delays-results}.

\begin{table}[h!]
\centering
\begin{tabular}{c|ccc}
& MATCH & \eqref{F-RPT} & \RPTS\\
\hline
\emph{toy} & 7.4 & 2.2 & 3.9\\
\emph{grid} & 9.5 & 2.7 & 4.2 \\
\emph{bahn} & 39.5 & 27.0 & 16.3
\end{tabular}
\caption{Average Passenger Delay in Minutes}
\label{delays-results}
\end{table}

It is arguable that the size of the propagated delays has even higher importance than the Nominal Travel Time: As mentioned in the introduction, people complain much more about delays and not so much about long travel times. Hence, it would even be reasonable to assign the delays with an additional weight factor in order to model these preferences which reinforces the quality of the timetables found by \ref{F-RPT} and \RPTS. Thus, traditional periodic timetabling chooses a timetable according to Table \ref{nominal}, whereas it makes more sense to make the choice based on the values of Table \ref{results}. In order to avoid passenger complaints, the decision can even be based on the values of Table \ref{delays-results}. Note that the consideration of the different timetable objective leads to a discussion on the compatibility of different evaluation functions which is analyzed in \cite{hartleb2019good}.

Another interesting observation is the improvement of the bounds of the algorithms for models \eqref{F-RPT} and \RPTS. The bounds depict the values of $lb_k$ and $ub_k$ from Algorithm \ref{f-cp} for \eqref{F-RPT} and the values of $lb$ and $ub$ from Algorithm \ref{f-cp-2} for \RPTS, each divided by (\# passengers) to get average travel times. Note that $ub$ of \RPTS~can lie below $lb$ (as for the \emph{toy} example) since it is not a real upper bound as Algorithm \ref{f-cp-2} determines it by scenario sampling.

\begin{figure}[h!]
\centering
\includegraphics[scale=1.0]{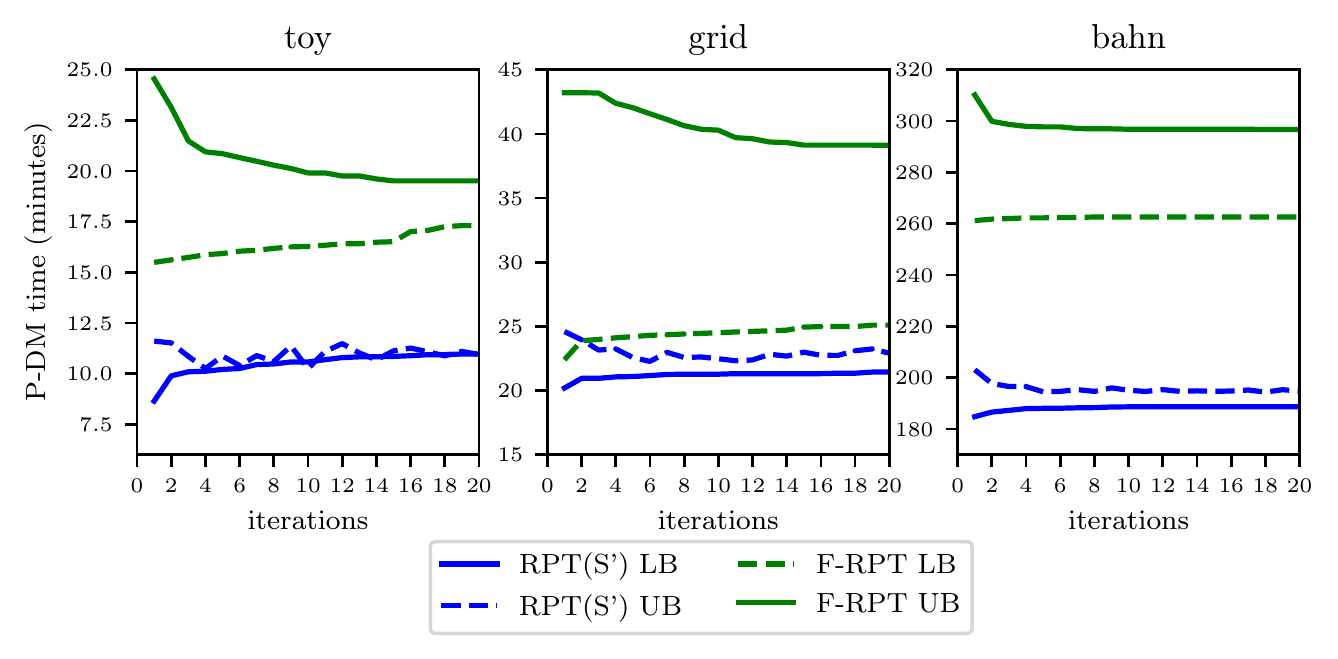}
\caption{Solution improvements}
\label{plt:bounds}
\end{figure}

Unfortunately, we see that the value of \eqref{RPT}, which has to lie somewhere between the lower bound of \RPTS~and the upper bound of \eqref{F-RPT}, cannot be pinned down to a small interval. Nevertheless, one interesting observation is there to be made: We started with the timetable found by MATCH in all instances, meaning that the upper bound in every first iteration corresponds to the model's evaluation of the MATCH timetable. Interestingly, there is a significant decrease in the upper bound from iteration 1 to iteration 2 in all instances for both models (except for \eqref{F-RPT} on \emph{grid}). Hence, the models find quite bad scenarios for the MATCH timetable, leading to the conclusion that MATCH generates actually a quite non-robust timetable. Furthermore it can be seen that after 20 iterations the bounds do not change so much anymore. This is especially true for \emph{bahn} and can be explained by the sheer size of the resulting problem. Thus, the models, despite being a simplification of \eqref{RPT}, are still computationally challenging and there might also be room for improvements regarding algorithm design and parameter tuning. Still, the timetables we have found already yield an improvement with respect to robustness in comparison to MATCH.

Finally, if we compare the values for travel times found by our algorithms against the travel times estimated by the workflow we can see that the estimated travel times fall between the lower bounds for \RPTS~and upper bounds for \eqref{F-RPT}, but much closer to the values of \RPTS. Hence, we can deduce that after a few iterations \RPTS~yields a good approximation of the delayed travel times, whereas \eqref{F-RPT} provides merely an upper bound and is not very close to the values of the evaluated timetables in Table \ref{results}. We conclude that even though the two algorithms introduced in this paper may not serve to give precise bounds on \eqref{RPT}, but they reliably find robust timetables.

\section{Conclusion and Outlook}\label{sec:con}
In this paper we have introduced a new model \eqref{RPT} for finding robust periodic timetables. Due to its high difficulty we introduced two relaxed versions of model \eqref{RPT}, namely \eqref{F-RPT}, which assumes a fixed delay management strategy, and \RPTS, which solves the robust timetabling problem for a finite uncertainty set. For each of the two models we presented a solution algorithm, i.e., Algorithm \ref{f-cp} for \eqref{F-RPT} and Algorithm \ref{f-cp-2} for \RPTS~which we tested computationally against MATCH, a standard \eqref{PESP} algorithm. The computational experiments show that for the timetables found by MATCH even small delays can induce huge passenger delays, raising concerns about the point of finding timetables that merely perform well in the absence of delays. Our proposed models, on the other hand, can cope significantly better with delays, but have worse nominal travel times -- a trade-off that seems reasonable when considering customer opinions of public transport systems.

Further research can be carried out in different directions. First and foremost we can further improve the proposed solution algorithms: Obviously, there are a number of additional parameters involved in the implementation that can be optimized (or learned). What is more, hybrid strategies of the two algorithms are also possible to implement and might improve the results. Different relaxation strategies, for example by dualizing the linear relaxation of \eqref{P-DM}, could also yield to strong solution algorithms. In general, it is possible to implement different delay management strategies instead of the no-wait strategy, as long as the new strategy can be formulated in the integer program. It would be very interesting to investigate the outcome of a different delay management strategy.
We could, of course, also add more detail to the model. Adding vehicle schedules by using turnaround activities in the EAN could be an interesting starting point. These turnaround activities would, however, be problematic for the problem \eqref{F-WC} as delays also would propagate along turnaround activities. Hence we would get no events $\Es$ and would have to find a different formulation to get rid of the inner minimization problem in order to simplify \eqref{RPT}.
On the theoretical side it would be interesting to analyze the periodic delay management model \eqref{P-DM} in more detail: Especially, finding a concrete bound between the two models \eqref{DM} and \eqref{P-DM} could further fortify the idea of considering delay management on periodic networks.
Finally, defining a slightly different problem might also be a good way to craft a robustness measure: In \eqref{RPT} it might be better not to use the maximum among all scenarios but to sum over all scenarios in order to get a better robustness measure instead. The advantage would be that the maximum vanishes and \eqref{RPT} would reduce to a quite difficult minimization problem.

All things considered, there are many ways to continue the proposed path of finding robust periodic timetables by including delay management. We hope that it will become more popular to consider potential delays and not only nominal travel times when planning a periodic timetable.

%\newpage
%\bibliography{eigen,rob-pesp}
\printbibliography
\end{document}